\DeclareMathOperator{\Sym}{Sym}
    \noindent\rule{6cm}{0.4pt}\\
\newtheorem{theorem}{Theorem}[section]
\newtheorem{definition}{Definition}[section]
\title{ The completeness of 2D Rubik's Shapes }
\author{Skylar Werner}
\address{Skylar Werner, Georg-August-Universität Göttingen, Department of Mathematics, Wilhelmsplatz 1, 37073 Göttingen, Germany}
\email{skylar.werner@stud.uni-goettingen.de}
\begin{document}

\maketitle

\thispagestyle{alim}


\begin{abstract}
    The Rubik's cube was invented in 1974 by Erno Rubik, who had no idea of the incredible popularity and mathematical fascinations his toy would bring. Through the years of study on the mathematical properties of the cube, the Rubik's Cube group was introduced to represent all possible moves one could perform on the cube. In this paper, we define a planar analogue to the Rubik's cube, which we dub the Rubik's Square, and prove that the Rubik's square is \textit{complete} in the sense that given any two configurations there is a sequence of moves which changes one to the other. The Rubik's cube does not have this property. We then abstract the concept of the Rubik's Square to a Rubik's Shape and analyse the \textit{completeness} in this more general setting.

\end{abstract}

\section{Background and Overview}
The Rubik's cube was introduced in the 1970's, and consists of a 3x3x3 configuration of small cubes creating the larger cube.  The center piece on each of the six sides is stationary, with the other pieces rotating around it.  As such there are 48 movable pieces to consider.

\subsection{}

One widely accepted standard for describing the cube and its moves is as follows:\\

\begin{minipage}[c]{0.45\textwidth}
    \begin{itemize}
        \item F - Front Face
        \item B - Back Face
        \item L - Left Face
        \item R - Right Face
        \item U - Up (Top) Face
        \item D - Down (Bottom) Face
    \end{itemize}
\end{minipage}
\begin{minipage}[c]{0.45\textwidth}
    \centering
    \includegraphics[scale=0.5]{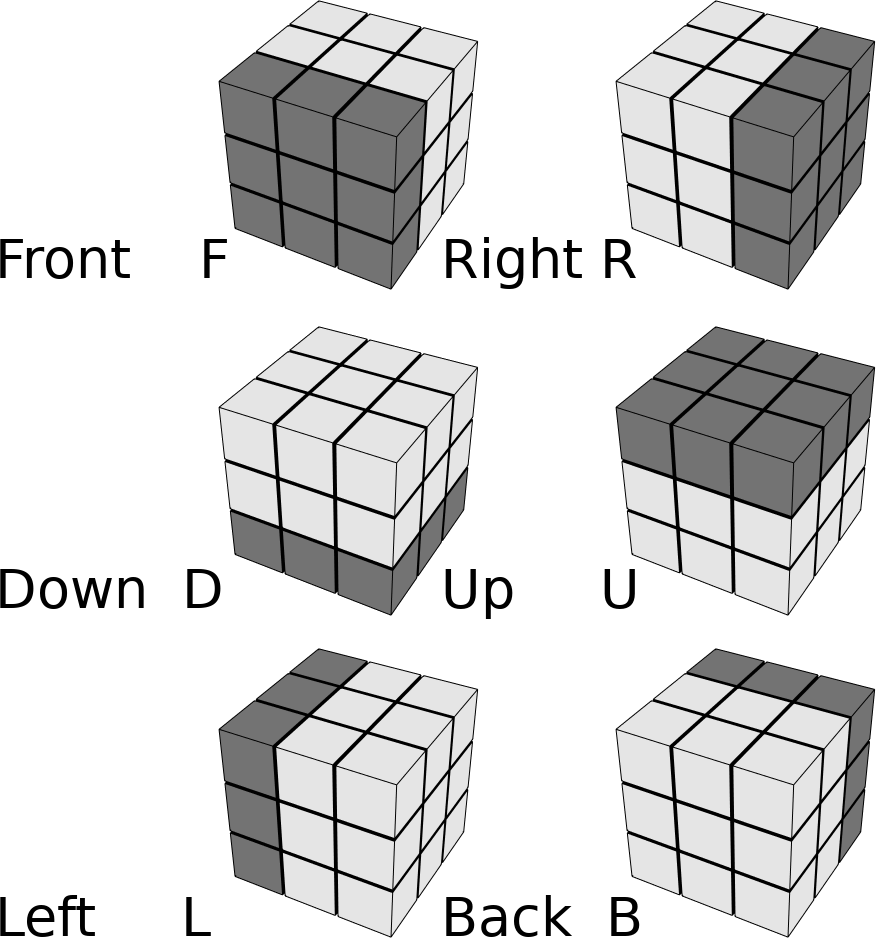}
    \captionof{figure}{Rubik's Cube moves}
\end{minipage}

The letters {F,B,L,R,U,D} are used not only to describe the faces of the cube, but also their motions, i.e. F would be a $90^{\circ}$ clockwise rotation of the front face.  These six basic moves generate all possible configurations of the standard Rubik's cube.

\subsection{}

There exists extensive \cite{Bandelow} \cite{GodsNumber} literature concerning the mathematical properties of the Rubik's cube.  In this paper we wanted to explore a 2-dimensional analog of the cube, as well as explore how that construction could be expanded to a more general case.
\\
In section 3 we introduce the $2 \times 2$ Rubik's square, giving its properties and algorithms for solving any possible state.
\\
Section 4 gives a simple generic case for 2-dimensional connected polygons called Rubik's shapes. It explores configurations with polygons connected by a single side, showing their relation to familiar concepts in group theory.  This section also presents an algorithm for this simple case when at least one of the polygons is even-sided.

\section{2 x 2 Rubik's square}

We start our study on the most natural 2-dimensional analog to the Rubik's cube, the $2 \times 2$ Rubik's square. We first define what a $2 \times 2$ Rubik's square is, the initial states, and when it is complete. Then we define a set of algorithms that will become the building blocks proving that every $2 \times 2$ Rubik's square is complete.

\begin{definition}
    The $2 \times 2$ Rubik's square is a graph $G$, with 4-cycles $\{C_1, \dots, C_4\}$. The shape is counstructed out of $4$ squares arranged to create a bigger square where the rotation of each smaller square is defined by $C_i$ for $i = 1, \dots, 4$.
\end{definition}

Each edge takes upon $1$ of $4$ colors where the colors can only be used $3$ times, no more or less. The edges will be denoted by numbers and the color they take on with a subscript, i.e. $1_r$ is edge 1 with color red. Note the numbering of the edges are for notational purpose only and the coloring is what truly matters.

\begin{definition}
    The initial state of a $2 \times 2$ Rubik's square is where the set of edges adjacent to a degree $3$ vertice is of the same color.
\end{definition}

Thus there are $4! = 24$ different initial states. In this paper, we will show that the initial state can be reduced to a single case. In Figure $2$ below, we will assume this is our initial state for this paper.

\begin{minipage}{.45\linewidth}
    \begin{itemize}
        \item $C_1 = (1\,4\,6\,3)$
        \item $C_2 = (2\,5\,7\,4)$
        \item $C_3 = (6\,9\,11\,8)$
        \item $C_4 = (7\,10\,12\,9)$
    \end{itemize}
\end{minipage}\hfill
\begin{minipage}{.45\linewidth}
    \begin{tikzpicture}
        \draw[ultra thick] (4,2) -- node[above] {$1_r$} (6,2);
        \draw[ultra thick] (6,2) -- node[above] {$2_r$} (8,2);
        \draw[ultra thick] (6,2) -- node[left] {$4_r$} (6,0);
        \draw[ultra thick] (4,2) -- node[left] {$3_b$} (4,0);
        \draw[ultra thick] (4,0) -- node[left] {$8_b$} (4,-2);
        \draw[ultra thick] (4,0) -- node[above] {$6_b$} (6,0);
        \draw[ultra thick] (8,2) -- node[left] {$5_w$} ( 8,0);
        \draw[ultra thick] (8,0) -- node[left] {$10_w$} (8,-2);
        \draw[ultra thick] (6,0) -- node[above] {$7_w$} (8,0);
        \draw[ultra thick] (4,-2) -- node[above] {$11_g$} (6,-2);
        \draw[ultra thick] (6,-2) -- node[left] {$9_g$} (6,0);
        \draw[ultra thick] (6,-2) -- node[above] {$12_g$}(8,-2);
    \end{tikzpicture}
    \centering
    
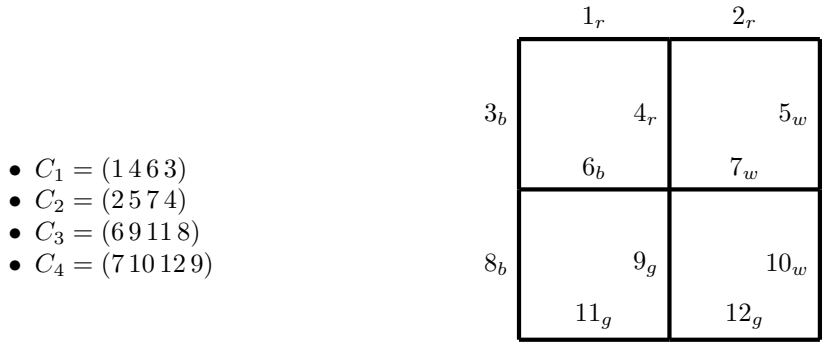
\captionof{figure}{Rubik's Square inital state}
\end{minipage}

\begin{definition}
    A $2 \times 2$ Rubik's square is \textit{complete} if given any two configurations there exists a sequence of rotations which transforms one configuration to the other.
\end{definition}

\subsection{Color Group Moves}

The Rubik's Cube can be physically rotated. With this observation in mind, allowing the Rubik's square to be rotatable was considered.

\begin{definition}
    A Color Group is the set of edges adjacent to a degree 3 vertices, e.g $\{1,2,4\}$.

    A Color Group Move is an algorithm that switches two color groups and is denoted by $G_{IJ}$ where $I, J \in \{T, L, R, B\}$ are representing the top, left, right, or bottom color group. For example, $G_{TL}$ is the algorithm switching the top and left color groups.
\end{definition}

The main idea of this section is to construct four basic Color Group Moves (see $1$ through $4$ below) which are then composed to abtain all other Color Group Moves. Notice that we are only interested in the color and not the ordering of the edges. For example, in figures $3$ and $4$ we apply $G_{TL}$ transforming an initial state to an another intial state.

\begin{enumerate}
    \item $G_{TL} = M_1M_3M_1M_2M_2M_2M_3M_3M_3M_1M_2$
    \item $G_{TR} = M_2M_1M_2M_4M_2M_4M_2M_4M_2M_1M_1M_1M_2M_4M_4M_4M_2M_4M_4M_4M_2M_2M_2$
    \item $G_{LB} = M_3M_4M_3M_1M_3M_1M_3M_1M_3M_3M_1M_3M_1M_1M_3M_3M_3M_4M_4M_4$
    \item $G_{RB} = M_4M_2M_4M_3M_3M_3M_2M_4M_2M_2M_4M_4M_4M_2M_4M_3$
    \item $G_{TB} = G_{TL}G_{LB}G_{TL}$
    \item $G_{LR} = G_{TL}G_{TR}G_{TL}$
\end{enumerate}
\begin{minipage}{.45\linewidth}
    \begin{tikzpicture}
        \draw[white] (0,0) -- (8,0);
        \draw[ultra thick] (1,2) -- node[above] {$1_r$} (3,2);
        \draw[ultra thick] (3,2) -- node[above] {$2_r$} (5,2);
        \draw[ultra thick] (3,2) -- node[left] {$4_r$} (3,0);
        \draw[ultra thick] (1,2) -- node[left] {$3_b$} (1,0);
        \draw[ultra thick] (1,0) -- node[left] {$8_b$} (1,-2);
        \draw[ultra thick] (1,0) -- node[above] {$6_b$} (3,0);
        \draw[ultra thick] (5,2) -- node[left] {$5_w$} (5,0);
        \draw[ultra thick] (5,0) -- node[left] {$10_w$} (5,-2);
        \draw[ultra thick] (3,0) -- node[above] {$7_w$} (5,0);
        \draw[ultra thick] (1,-2) -- node[above] {$11_g$} (3,-2);
        \draw[ultra thick] (3,-2) -- node[left] {$9_g$} (3,0);
        \draw[ultra thick] (3,-2) -- node[above] {$12_g$} (5,-2);
    \end{tikzpicture}
    \centering
    
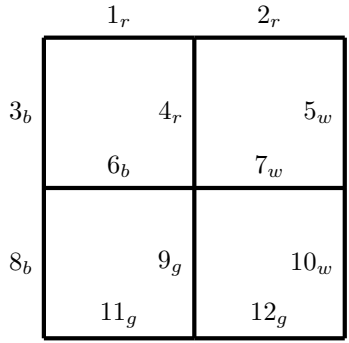
\captionof{figure}{Rubik's Square inital state}
\end{minipage}\hfill
\begin{minipage}{.45\linewidth}
    \begin{tikzpicture}
        \draw[ultra thick] (4,2) -- node[above] {$8_b$} (6,2);
        \draw[ultra thick] (6,2) -- node[above] {$6_b$} (8,2);
        \draw[ultra thick] (6,2) -- node[left] {$3_b$} (6,0);
        \draw[ultra thick] (4,2) -- node[left] {$4_r$} (4,0);
        \draw[ultra thick] (4,0) -- node[left] {$1_r$} (4,-2);
        \draw[ultra thick] (4,0) -- node[above] {$2_r$} (6,0);
        \draw[ultra thick] (8,2) -- node[left] {$5_w$} ( 8,0);
        \draw[ultra thick] (8,0) -- node[left] {$10_w$} (8,-2);
        \draw[ultra thick] (6,0) -- node[above] {$7_w$} (8,0);
        \draw[ultra thick] (4,-2) -- node[above] {$11_g$} (6,-2);
        \draw[ultra thick] (6,-2) -- node[left] {$9_g$} (6,0);
        \draw[ultra thick] (6,-2) -- node[above] {$12_g$}(8,-2);
    \end{tikzpicture}
    \centering
    
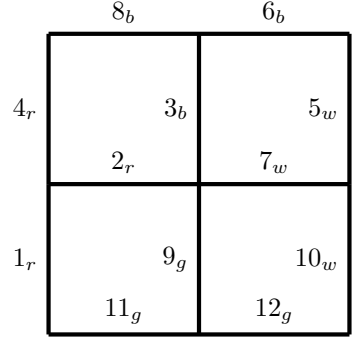
\captionof{figure}{Initial state after applying $G_{TL}$}
\end{minipage}\\

By symmetry, we could create similar algorithms for each Color Group Move where they would mimick each other. These algorithms move the edges around differently, an observation that will be utilized in the next section.

\subsection{Line Moves}

The most powerful and useful algorithms for the Rubik's square are the ones switching any two sides. In the definition below, we attempt to define a similar algorithms to fit this set up.

\begin{definition}
    A Line Move is an algorithm that switches any two edges.
\end{definition}

We denote a Line Move switching edges $X$ and $Y$ by $L_{XY}$. For example, $L_{13}$ is the algorithm switching edge $1$ and $3$ shown below.

\begin{itemize}
    \item $L_{1\,3} = M_2M_1M_2M_4M_4M_4M_3M_3M_1M_1M_1M_3M_1M_3M_3M_4M_2M_2M_1M_2$
\end{itemize}
\begin{minipage}{.45\linewidth}
    \begin{tikzpicture}
        \draw[white] (0,0) -- (8,0);
        \draw[ultra thick] (1,2) -- node[above] {$1_r$} (3,2);
        \draw[ultra thick] (3,2) -- node[above] {$2_r$} (5,2);
        \draw[ultra thick] (3,2) -- node[left] {$4_r$} (3,0);
        \draw[ultra thick] (1,2) -- node[left] {$3_b$} (1,0);
        \draw[ultra thick] (1,0) -- node[left] {$8_b$} (1,-2);
        \draw[ultra thick] (1,0) -- node[above] {$6_b$} (3,0);
        \draw[ultra thick] (5,2) -- node[left] {$5_w$} (5,0);
        \draw[ultra thick] (5,0) -- node[left] {$10_w$} (5,-2);
        \draw[ultra thick] (3,0) -- node[above] {$7_w$} (5,0);
        \draw[ultra thick] (1,-2) -- node[above] {$11_g$} (3,-2);
        \draw[ultra thick] (3,-2) -- node[left] {$9_g$} (3,0);
        \draw[ultra thick] (3,-2) -- node[above] {$12_g$} (5,-2);
    \end{tikzpicture}
    \centering
    
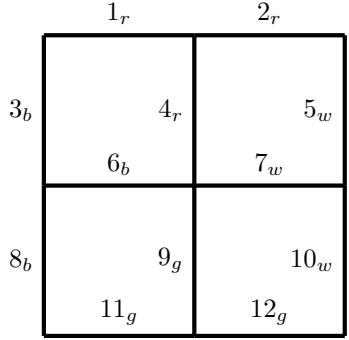
\captionof{figure}{Rubik's Square inital state}
\end{minipage}\hfill
\begin{minipage}{.45\linewidth}
    \begin{tikzpicture}
        \draw[ultra thick] (1,2) -- node[above] {$3_b$} (3,2);
        \draw[ultra thick] (3,2) -- node[above] {$2_r$} (5,2);
        \draw[ultra thick] (3,2) -- node[left] {$4_r$} (3,0);
        \draw[ultra thick] (1,2) -- node[left] {$1_r$} (1,0);
        \draw[ultra thick] (1,0) -- node[left] {$8_b$} (1,-2);
        \draw[ultra thick] (1,0) -- node[above] {$6_b$} (3,0);
        \draw[ultra thick] (5,2) -- node[left] {$5_w$} (5,0);
        \draw[ultra thick] (5,0) -- node[left] {$10_w$} (5,-2);
        \draw[ultra thick] (3,0) -- node[above] {$7_w$} (5,0);
        \draw[ultra thick] (1,-2) -- node[above] {$11_g$} (3,-2);
        \draw[ultra thick] (3,-2) -- node[left] {$9_g$} (3,0);
        \draw[ultra thick] (3,-2) -- node[above] {$12_g$} (5,-2);
    \end{tikzpicture}
    \centering
    
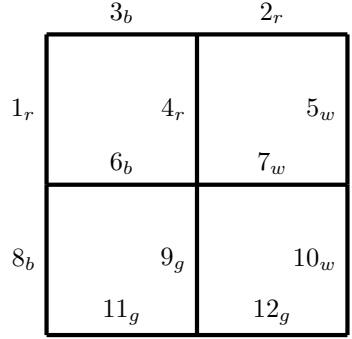
\captionof{figure}{Initial state after applying $L_{1\,3}$}
\end{minipage}\\

By symmetry, there are Line Moves for $L_{2\,5}, L_{8\,11},$ and $L_{10\,12}$. Using how the Color Group Moves move the edges around, we will define some of the different Line Moves. We leave the ones not written for the reader.

\begin{itemize}
    \item $L_{4\,8} = G_{TL}L_{1\,3}G_{TL}^{-1}$
    \item $L_{5\,6} = G_{TL}L_{2\,5}G_{TL}^{-1}$
    \item $L_{1\,11} = G_{TB}L_{8\,11}G_{TB}^{-1}$
    \item $L_{3\,11} = L_{1\,3}L_{1\,11}L_{1\,3}$
    \item $L_{4\,7} = G_{TR}L_{2\,5}G_{TR}^{-1}$
\end{itemize}

It can be checked that all Line Moves can be defined and thus every $2 \times 2$ Rubik's square is complete.

\section{2D Rubik's shape with one connected edge given one shape has an even side}


In this section, we attempt to analyse the completeness of more general Rubik's shapes. Conceptually, the construction of a Rubik's shape starts with any polygon followed by inductively constructing new polygons from the sides of the old polygons, see figure $7$ below. We do not allow the new polygon to be made from more than one polygon.

\tikzset{every picture/.style={line width=0.75pt}}

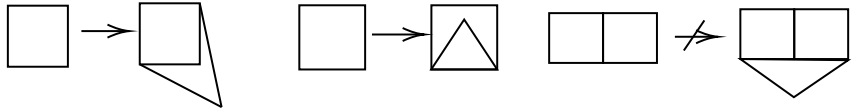
\begin{figure}[!ht]
    \centering
    \begin{tikzpicture}[x=0.75pt,y=0.75pt,yscale=-1,xscale=1]

        \draw   (485.43,33) -- (512.61,33) -- (512.61,58.13) -- (485.43,58.13) -- cycle ;
        \draw    (153.09,44.03) -- (175.18,44.03) ;
        \draw [shift={(177.18,44.03)}, rotate = 180] [color={rgb, 255:red, 0; green, 0; blue, 0 }  ][line width=0.75]    (10.93,-3.29) .. controls (6.95,-1.4) and (3.31,-0.3) .. (0,0) .. controls (3.31,0.3) and (6.95,1.4) .. (10.93,3.29)   ;
        \draw   (182.52,30) -- (212.8,30) -- (212.8,60.82) -- (182.52,60.82) -- cycle ;
        \draw    (182.52,60.82) -- (223.8,82.4) ;
        \draw    (212.8,30) -- (223.8,82.4) ;
        \draw   (263,31) -- (296.19,31) -- (296.19,63.4) -- (263,63.4) -- cycle ;
        \draw    (299.64,45.75) -- (324.05,45.75) ;
        \draw [shift={(326.05,45.75)}, rotate = 180] [color={rgb, 255:red, 0; green, 0; blue, 0 }  ][line width=0.75]    (10.93,-3.29) .. controls (6.95,-1.4) and (3.31,-0.3) .. (0,0) .. controls (3.31,0.3) and (6.95,1.4) .. (10.93,3.29)   ;
        \draw   (329.61,31) -- (362.8,31) -- (362.8,63.4) -- (329.61,63.4) -- cycle ;
        \draw   (346.15,38.19) -- (362.8,63.4) -- (329.5,63.4) -- cycle ;
        \draw   (116,31.2) -- (146.28,31.2) -- (146.28,62.02) -- (116,62.02) -- cycle ;
        \draw   (512.61,33) -- (539.8,33) -- (539.8,58.13) -- (512.61,58.13) -- cycle ;
        \draw   (512.43,77.4) -- (485.43,58.13) -- (539.79,58.71) -- cycle ;
        \draw   (389,34.96) -- (416.19,34.96) -- (416.19,60.09) -- (389,60.09) -- cycle ;
        \draw   (416.19,34.96) -- (443.37,34.96) -- (443.37,60.09) -- (416.19,60.09) -- cycle ;
        \draw    (452.41,46.89) -- (472.04,46.89) ;
        \draw [shift={(474.04,46.89)}, rotate = 180] [color={rgb, 255:red, 0; green, 0; blue, 0 }  ][line width=0.75]    (10.93,-3.29) .. controls (6.95,-1.4) and (3.31,-0.3) .. (0,0) .. controls (3.31,0.3) and (6.95,1.4) .. (10.93,3.29)   ;
        \draw    (467.29,38.63) -- (456.93,53.83) ;

    \end{tikzpicture}
    \centering
    \captionof{figure}{The first two are valid, the third isn't}
\end{figure}

\begin{definition}\label{RubikShapeDef}
    A \emph{2d-Rubik's shape} is a graph $G$, with a set of distinct cycles $\{C_1, \dots, C_n\}$ of $G$  which is built up by induction as follows:

    \begin{enumerate}
        \item The $n$-cycles for $n \geq 3$ are all 2d-Rubik's shapes.
        \item Given a 2d-Rubik's shape $G$ with distinguished cycles $\{C_1, \dots C_n\}$, if we let $P$ be a path in $G$ with at least two vertices so that $E(P) \cap E(C_i) \cap E(C_j) = \emptyset$ for any distinct $i, j$, and let $G' = G \cup \{v_1, \dots v_k\}$ with $v_i$ is connected to $v_{i+1}$ and $v_1$ and $v_k$ are connected to the different endpoints of $P$ by an edge, then $G'$, together with the distinguished cycles $\{C_1, \dots C_n\} \cup \{P \cup \{v_1, \dots v_k\}\}$ is a 2d-Rubik's shape.
    \end{enumerate}
\end{definition}


The definition below captures the abstract notion of completeness for the $2$d-Rubik's shape.

\begin{definition}
    A 2d-Rubik's shape $G, \{C_1, \dots C_n\}$ is \emph{complete} if the subgroup of $H \leq \Sym(E(G))$ generated by the permutations $\sigma_1, \dots, \sigma_n$ which respectively rotate the edges of the cycles $C_1, \dots, C_n$ is isomorphic to $\Sym(E(G))$.
\end{definition}

We consider the case of Rubik's shapes where every cycle shares only zero or one edges to every other cycle, i.e. $|E(C_i) \cap E(C_j)| = \{0, 1\}$ for all $i,j \in \mathbb{N}$ such that $i \neq j$.


\begin{theorem}

    Suppose that $G$ is a complete 2d-Rubik's shape with distinct cycles $C_1, C_2, \dots, C_n$ where at least one cycle is even. Then adding a polygon following the construction from Defintion \ref*{RubikShapeDef} results in a new complete 2d-Rubik's shape.
\end{theorem}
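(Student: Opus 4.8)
The plan is to argue entirely inside permutation groups and to show that the single new generator --- the rotation $\sigma_{n+1}$ of the added cycle $C_{n+1}$ --- together with the old group $H=\langle\sigma_1,\dots,\sigma_n\rangle$ already produces every transposition of $E(G')$, and hence all of $\Sym(E(G'))$. I would begin with a preliminary reduction: since $G$ is complete, $H$ is a subgroup of the \emph{finite} group $\Sym(E(G))$ that is abstractly isomorphic to it, so comparing orders forces $H=\Sym(E(G))$ exactly; in particular $H$ contains every transposition of $E(G)$ and is transitive on $E(G)$. Then I would fix notation for the attachment of Definition~\ref{RubikShapeDef}: the new cycle $C_{n+1}$ is built from a path $P\subseteq G$ whose edges $e_1,\dots,e_p$ (with $p\ge 1$) occur consecutively along $C_{n+1}$, by splicing in a path of fresh vertices which contributes new edges $f_1,\dots,f_m$ (with $m\ge 1$), also consecutive along $C_{n+1}$. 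Thus $E(G')=E(G)\sqcup\{f_1,\dots,f_m\}$ and, after orienting $C_{n+1}$ appropriately, $\sigma_{n+1}$ is the cyclic permutation $(e_1\ e_2\ \cdots\ e_p\ f_1\ \cdots\ f_m)$. The only geometric input needed is what this encodes: $\sigma_{n+1}$ carries the last edge $e_p$ of $P$ to the first new edge $f_1$, and $\sigma_{n+1}$ fixes every edge of $G$ lying off $P$ --- the only edges of $G$ on the new polygon being those of $P$.

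The core of the argument is to manufacture one ``mixed'' transposition (old edge against new edge) and propagate it. A path cannot contain all the edges of a graph that has a cycle, so $E(G)\setminus E(P)\ne\emptyset$; fix $g$ in it. Then $(e_p\ g)\in H\le H'$, and conjugating by $\sigma_{n+1}$ --- which fixes $g$ and sends $e_p\mapsto f_1$ --- gives $(f_1\ g)\in H'$. Conjugating $(f_1\ g)$ by the elements of $\Sym(E(G))=H$ (all of which fix $f_1$ and act transitively on $E(G)$) yields $(f_1\ g')$ for every $g'\in E(G)$; conjugating by the powers $\sigma_{n+1}^j$ for $0\le j\le m-1$ walks $f_1$ along to $f_2,\dots,f_m$ and yields $(f_i\ g)\in H'$ for all $i$, whence $(f_1\ f_i)=(f_i\ g)(f_1\ g)(f_i\ g)\in H'$. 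Now $H'$ contains the whole star of transpositions $\{(f_1\ x):x\in E(G')\setminus\{f_1\}\}$, which generates $\Sym(E(G'))$; since also $H'\le\Sym(E(G'))$, we conclude $H'=\Sym(E(G'))$, i.e.\ $G'$ is complete. That $G'$ is a $2$d-Rubik's shape is immediate from Definition~\ref{RubikShapeDef}.

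The main obstacle is really just the bookkeeping in the previous paragraph: one must carefully verify the cyclic order of the edges around $C_{n+1}$, the incidence $\sigma_{n+1}(e_p)=f_1$, and that $\sigma_{n+1}$ fixes $E(G)\setminus E(P)$ pointwise (equivalently, that no old edge outside the attaching path lies on the new polygon); after that the proof is a mechanical chain of conjugations. I would also note that the hypothesis ``at least one cycle is even'' is in fact automatic here: if every $C_i$ were odd, then every $\sigma_i$ would be an even permutation, so $H$ would lie in the alternating group on $E(G)$, contradicting the completeness of $G$. It is presumably listed because a more computational proof, in the style of the explicit Line Moves of the previous section, would use an even cycle to build a genuine transposition by hand.
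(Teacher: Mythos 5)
Your proof is correct, but it takes a genuinely different route from the paper. The paper's proof is constructive and computational: it exhibits an explicit word $\sigma_1^{j+1}(\sigma_2^{-1}\sigma_1\sigma_2\sigma_1)^{(k-2)/2}\sigma_2^{-1}\sigma_1^2\sigma_2\sigma_1^{k+1-j}$ realizing the adjacent transposition $(j-1\ j)$ on the even cycle, verifies it by tracking edge positions through a sequence of diagrams, and then conjugates to reach arbitrary transpositions; its inductive step similarly conjugates an adjacent-transposition algorithm $\psi_{\ell+1}$ of the shared edge into the new cycle. You instead exploit the hypothesis of completeness abstractly: since $H=\Sym(E(G))$ already contains every transposition of old edges, a single conjugation by $\sigma_{n+1}$ of a transposition $(e_p\ g)$ (with $g$ off the attaching path, which exists because a path cannot exhaust a graph containing a cycle) produces a mixed transposition $(f_1\ g)$, and the star of transpositions at $f_1$ then follows by routine conjugation. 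Your argument is shorter, needs no case analysis, works no matter how many edges the new polygon shares with the old shape, and correctly observes that the evenness hypothesis is redundant given completeness (all-odd cycles would force $H$ into the alternating group). What the paper's version buys that yours does not is the base case: its opening computation actually establishes that a two-polygon shape with one even polygon is complete, which is the seed needed to apply the theorem iteratively, since a single polygon is never complete and so the theorem's hypothesis is not satisfied at the start of the induction; your proof, taking completeness of $G$ as given, proves exactly the stated implication but cannot supply that seed. The paper's explicit words also double as solving algorithms in the spirit of its Section 2, which your existential argument does not provide.
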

\begin{proof}
    Base Case: Suppose we have a Rubik's shape with cycles $C_1$ and $C_2$. Without loss of generality, suppose that $C_1$ is even.
    Label the edges of $C_1$ by $(0, 1, \dots, k)$ and the edges of $C_2$ by $(0, k+\ell, k+\ell-1, \dots, k+2, k+1)$, so that if we identify $\sigma_1, \sigma_2$ with the corresponding elements of $\Sym(k+\ell+1)$ then $\sigma_1$ is the cycle $(0, 1, \dots, k)$ and $\sigma_2$ is the cycle $(0, k+\ell, k+\ell-1, \dots, k+2, k+1)$

    We claim that if $0\leq j-1 < j \leq k$ then the product,
    \[ \sigma_1^{j+1}\left(\sigma_2^{-1}\sigma_1\sigma_2\sigma_1\right)^{\frac{k-2}{2}}\sigma_2^{-1}\sigma_1^2\sigma_2\sigma_1^{k+1-j} \]
    will transpose the edges $j-1$ and $j$.


    \tikzset{every picture/.style={line width=0.75pt}} 

    \begin{tikzpicture}[x=0.75pt,y=0.75pt,yscale=-1,xscale=1]

        \draw   (118.79,76.79) .. controls (118.79,40.73) and (149.51,11.51) .. (187.4,11.51) .. controls (225.29,11.51) and (256.01,40.73) .. (256.01,76.79) .. controls (256.01,112.84) and (225.29,142.07) .. (187.4,142.07) .. controls (149.51,142.07) and (118.79,112.84) .. (118.79,76.79) -- cycle ;
        \draw  [draw opacity=0] (249.84,103.26) .. controls (252.37,104.98) and (254.82,106.96) .. (257.14,109.2) .. controls (272.26,123.81) and (275.86,143.77) .. (265.19,153.76) .. controls (254.52,163.76) and (233.62,160.01) .. (218.5,145.39) .. controls (216.18,143.15) and (214.14,140.79) .. (212.37,138.36) -- (237.82,127.29) -- cycle ; \draw   (249.84,103.26) .. controls (252.37,104.98) and (254.82,106.96) .. (257.14,109.2) .. controls (272.26,123.81) and (275.86,143.77) .. (265.19,153.76) .. controls (254.52,163.76) and (233.62,160.01) .. (218.5,145.39) .. controls (216.18,143.15) and (214.14,140.79) .. (212.37,138.36) ;
        \draw   (337.02,71.28) .. controls (337.02,35.23) and (367.73,6) .. (405.63,6) .. controls (443.52,6) and (474.24,35.23) .. (474.24,71.28) .. controls (474.24,107.34) and (443.52,136.56) .. (405.63,136.56) .. controls (367.73,136.56) and (337.02,107.34) .. (337.02,71.28) -- cycle ;
        \draw  [draw opacity=0] (468.07,97.75) .. controls (470.6,99.47) and (473.05,101.45) .. (475.36,103.69) .. controls (490.48,118.31) and (494.09,138.26) .. (483.42,148.25) .. controls (472.75,158.25) and (451.85,154.5) .. (436.73,139.89) .. controls (434.41,137.65) and (432.37,135.29) .. (430.6,132.85) -- (456.05,121.79) -- cycle ; \draw   (468.07,97.75) .. controls (470.6,99.47) and (473.05,101.45) .. (475.36,103.69) .. controls (490.48,118.31) and (494.09,138.26) .. (483.42,148.25) .. controls (472.75,158.25) and (451.85,154.5) .. (436.73,139.89) .. controls (434.41,137.65) and (432.37,135.29) .. (430.6,132.85) ;
        \draw    (273.37,67.94) -- (329.23,67.94) ;
        \draw [shift={(331.23,67.94)}, rotate = 180] [color={rgb, 255:red, 0; green, 0; blue, 0 }  ][line width=0.75]    (10.93,-3.29) .. controls (6.95,-1.4) and (3.31,-0.3) .. (0,0) .. controls (3.31,0.3) and (6.95,1.4) .. (10.93,3.29)   ;
        \draw   (118.79,265.36) .. controls (118.79,229.3) and (149.51,200.07) .. (187.4,200.07) .. controls (225.29,200.07) and (256.01,229.3) .. (256.01,265.36) .. controls (256.01,301.41) and (225.29,330.64) .. (187.4,330.64) .. controls (149.51,330.64) and (118.79,301.41) .. (118.79,265.36) -- cycle ;
        \draw  [draw opacity=0] (249.84,291.83) .. controls (252.37,293.55) and (254.82,295.53) .. (257.14,297.77) .. controls (272.26,312.38) and (275.86,332.33) .. (265.19,342.33) .. controls (254.52,352.32) and (233.62,348.58) .. (218.5,333.96) .. controls (216.18,331.72) and (214.14,329.36) .. (212.37,326.93) -- (237.82,315.86) -- cycle ; \draw   (249.84,291.83) .. controls (252.37,293.55) and (254.82,295.53) .. (257.14,297.77) .. controls (272.26,312.38) and (275.86,332.33) .. (265.19,342.33) .. controls (254.52,352.32) and (233.62,348.58) .. (218.5,333.96) .. controls (216.18,331.72) and (214.14,329.36) .. (212.37,326.93) ;
        \draw    (331.95,183.56) -- (264.05,225.18) ;
        \draw [shift={(262.35,226.23)}, rotate = 328.49] [color={rgb, 255:red, 0; green, 0; blue, 0 }  ][line width=0.75]    (10.93,-3.29) .. controls (6.95,-1.4) and (3.31,-0.3) .. (0,0) .. controls (3.31,0.3) and (6.95,1.4) .. (10.93,3.29)   ;
        \draw   (342.82,251.59) .. controls (342.82,215.54) and (373.53,186.31) .. (411.43,186.31) .. controls (449.32,186.31) and (480.04,215.54) .. (480.04,251.59) .. controls (480.04,287.65) and (449.32,316.87) .. (411.43,316.87) .. controls (373.53,316.87) and (342.82,287.65) .. (342.82,251.59) -- cycle ;
        \draw  [draw opacity=0] (473.87,278.06) .. controls (476.4,279.78) and (478.85,281.76) .. (481.16,284) .. controls (496.28,298.62) and (499.89,318.57) .. (489.22,328.57) .. controls (478.55,338.56) and (457.65,334.81) .. (442.53,320.2) .. controls (440.21,317.96) and (438.17,315.6) .. (436.4,313.16) -- (461.85,302.1) -- cycle ; \draw   (473.87,278.06) .. controls (476.4,279.78) and (478.85,281.76) .. (481.16,284) .. controls (496.28,298.62) and (499.89,318.57) .. (489.22,328.57) .. controls (478.55,338.56) and (457.65,334.81) .. (442.53,320.2) .. controls (440.21,317.96) and (438.17,315.6) .. (436.4,313.16) ;
        \draw    (271.92,268.89) -- (330.68,269.03) ;
        \draw [shift={(332.68,269.03)}, rotate = 180.13] [color={rgb, 255:red, 0; green, 0; blue, 0 }  ][line width=0.75]    (10.93,-3.29) .. controls (6.95,-1.4) and (3.31,-0.3) .. (0,0) .. controls (3.31,0.3) and (6.95,1.4) .. (10.93,3.29)   ;
        \draw   (123.14,438.78) .. controls (123.14,402.73) and (153.86,373.5) .. (191.75,373.5) .. controls (229.64,373.5) and (260.36,402.73) .. (260.36,438.78) .. controls (260.36,474.84) and (229.64,504.07) .. (191.75,504.07) .. controls (153.86,504.07) and (123.14,474.84) .. (123.14,438.78) -- cycle ;
        \draw  [draw opacity=0] (254.19,465.26) .. controls (256.72,466.97) and (259.17,468.96) .. (261.49,471.19) .. controls (276.61,485.81) and (280.21,505.76) .. (269.54,515.76) .. controls (258.87,525.75) and (237.97,522.01) .. (222.85,507.39) .. controls (220.53,505.15) and (218.49,502.79) .. (216.72,500.36) -- (242.17,489.29) -- cycle ; \draw   (254.19,465.26) .. controls (256.72,466.97) and (259.17,468.96) .. (261.49,471.19) .. controls (276.61,485.81) and (280.21,505.76) .. (269.54,515.76) .. controls (258.87,525.75) and (237.97,522.01) .. (222.85,507.39) .. controls (220.53,505.15) and (218.49,502.79) .. (216.72,500.36) ;
        \draw    (350.08,371.58) -- (280.01,413.76) ;
        \draw [shift={(278.3,414.79)}, rotate = 328.95] [color={rgb, 255:red, 0; green, 0; blue, 0 }  ][line width=0.75]    (10.93,-3.29) .. controls (6.95,-1.4) and (3.31,-0.3) .. (0,0) .. controls (3.31,0.3) and (6.95,1.4) .. (10.93,3.29)   ;

        \draw (222.43,107.52) node [anchor=north west][inner sep=0.75pt]   [align=left] {$\displaystyle 0$};
        \draw (192.79,122.71) node [anchor=north west][inner sep=0.75pt]   [align=left] {$\displaystyle 1$};
        \draw (170.95,121.6) node [anchor=north west][inner sep=0.75pt]   [align=left] {$\displaystyle 2$};
        \draw (122.45,76.99) node [anchor=north west][inner sep=0.75pt]   [align=left] {...};
        \draw (126.18,40.82) node [anchor=north west][inner sep=0.75pt]   [align=left] {$\displaystyle j-1$};
        \draw (157.1,21.81) node [anchor=north west][inner sep=0.75pt]   [align=left] {$\displaystyle j$};
        \draw (105.36,16.61) node [anchor=north west][inner sep=0.75pt]   [align=left] {...};
        \draw (217.37,55.09) node [anchor=north west][inner sep=0.75pt]   [align=left] {$\displaystyle k-1$};
        \draw (239.19,82.75) node [anchor=north west][inner sep=0.75pt]   [align=left] {$\displaystyle k$};
        \draw (181.72,146.88) node [anchor=north west][inner sep=0.75pt]   [align=left] {$\displaystyle k+1$};
        \draw (211.07,159.05) node [anchor=north west][inner sep=0.75pt]   [align=left] {$\displaystyle k+2$};
        \draw (272.02,114.37) node [anchor=north west][inner sep=0.75pt]   [align=left] {$\displaystyle k+\ell -1$};
        \draw (265.85,153.61) node [anchor=north west][inner sep=0.75pt]   [align=left] {...};
        \draw (259.24,92.02) node [anchor=north west][inner sep=0.75pt]   [align=left] {$\displaystyle k+\ell $};
        \draw (402.13,139.38) node [anchor=north west][inner sep=0.75pt]   [align=left] {$\displaystyle k+1$};
        \draw (430.85,158.24) node [anchor=north west][inner sep=0.75pt]   [align=left] {$\displaystyle k+2$};
        \draw (490.52,110.17) node [anchor=north west][inner sep=0.75pt]   [align=left] {$\displaystyle k+\ell -1$};
        \draw (489.16,150.17) node [anchor=north west][inner sep=0.75pt]   [align=left] {...};
        \draw (481.29,81.21) node [anchor=north west][inner sep=0.75pt]   [align=left] {$\displaystyle k+\ell $};
        \draw (285.2,43.55) node [anchor=north west][inner sep=0.75pt]   [align=left] {$\displaystyle \sigma _{1}^{k-j}$};
        \draw (440.56,102.95) node [anchor=north west][inner sep=0.75pt]   [align=left] {$\displaystyle j$};
        \draw (433.56,67.03) node [anchor=north west][inner sep=0.75pt]   [align=left] {$\displaystyle j-1$};
        \draw (430.94,41.63) node [anchor=north west][inner sep=0.75pt]   [align=left] {$\displaystyle j-2$};
        \draw (389.44,114.03) node [anchor=north west][inner sep=0.75pt]   [align=left] {$\displaystyle j+1$};
        \draw (358.27,92.39) node [anchor=north west][inner sep=0.75pt]   [align=left] {$\displaystyle j+2$};
        \draw (366.05,26.76) node [anchor=north west][inner sep=0.75pt]   [align=left] {...};
        \draw (203.5,293.91) node [anchor=north west][inner sep=0.75pt]   [align=left] {$\displaystyle k+1$};
        \draw (177.34,333.48) node [anchor=north west][inner sep=0.75pt]   [align=left] {$\displaystyle k+2$};
        \draw (275.02,323.95) node [anchor=north west][inner sep=0.75pt]   [align=left] {$\displaystyle k+\ell -1$};
        \draw (265.13,344.93) node [anchor=north west][inner sep=0.75pt]   [align=left] {...};
        \draw (270.39,298.67) node [anchor=north west][inner sep=0.75pt]   [align=left] {$\displaystyle k+\ell $};
        \draw (266.3,282.26) node [anchor=north west][inner sep=0.75pt]   [align=left] {$\displaystyle j$};
        \draw (220.43,265.86) node [anchor=north west][inner sep=0.75pt]   [align=left] {$\displaystyle j-1$};
        \draw (218.16,238.46) node [anchor=north west][inner sep=0.75pt]   [align=left] {$\displaystyle j-2$};
        \draw (151.61,299.04) node [anchor=north west][inner sep=0.75pt]   [align=left] {$\displaystyle j+1$};
        \draw (128.41,277.27) node [anchor=north west][inner sep=0.75pt]   [align=left] {$\displaystyle j+2$};
        \draw (147.82,220.83) node [anchor=north west][inner sep=0.75pt]   [align=left] {...};
        \draw (284.53,179.91) node [anchor=north west][inner sep=0.75pt]   [align=left] {$\displaystyle \sigma _{2}$};
        \draw (207.27,349.87) node [anchor=north west][inner sep=0.75pt]   [align=left] {$\displaystyle k+3$};
        \draw (364.08,275.34) node [anchor=north west][inner sep=0.75pt]   [align=left] {$\displaystyle k+1$};
        \draw (408.1,319.47) node [anchor=north west][inner sep=0.75pt]   [align=left] {$\displaystyle k+2$};
        \draw (499.6,308.94) node [anchor=north west][inner sep=0.75pt]   [align=left] {$\displaystyle k+\ell -1$};
        \draw (489.88,329.79) node [anchor=north west][inner sep=0.75pt]   [align=left] {...};
        \draw (495.24,285.9) node [anchor=north west][inner sep=0.75pt]   [align=left] {$\displaystyle k+\ell $};
        \draw (491.88,264.5) node [anchor=north west][inner sep=0.75pt]   [align=left] {$\displaystyle j$};
        \draw (387.61,291.83) node [anchor=north west][inner sep=0.75pt]   [align=left] {$\displaystyle j-1$};
        \draw (427.69,279) node [anchor=north west][inner sep=0.75pt]   [align=left] {$\displaystyle j-2$};
        \draw (351.29,249.75) node [anchor=north west][inner sep=0.75pt]   [align=left] {$\displaystyle j+1$};
        \draw (348.77,225.23) node [anchor=north west][inner sep=0.75pt]   [align=left] {$\displaystyle j+2$};
        \draw (371.85,207.07) node [anchor=north west][inner sep=0.75pt]   [align=left] {...};
        \draw (444.37,334.86) node [anchor=north west][inner sep=0.75pt]   [align=left] {$\displaystyle k+3$};
        \draw (296.86,247.57) node [anchor=north west][inner sep=0.75pt]   [align=left] {$\displaystyle \sigma _{1}^{2}$};
        \draw (444.81,250.03) node [anchor=north west][inner sep=0.75pt]   [align=left] {$\displaystyle j-3$};
        \draw (438.19,224.38) node [anchor=north west][inner sep=0.75pt]   [align=left] {$\displaystyle j-4$};
        \draw (143.95,466.84) node [anchor=north west][inner sep=0.75pt]   [align=left] {$\displaystyle k+1$};
        \draw (222.07,524.99) node [anchor=north west][inner sep=0.75pt]   [align=left] {$\displaystyle k+2$};
        \draw (279.82,480.05) node [anchor=north west][inner sep=0.75pt]   [align=left] {$\displaystyle k+\ell -1$};
        \draw (270.93,514.91) node [anchor=north west][inner sep=0.75pt]   [align=left] {...};
        \draw (268.76,455.46) node [anchor=north west][inner sep=0.75pt]   [align=left] {$\displaystyle k+\ell $};
        \draw (223.78,469.77) node [anchor=north west][inner sep=0.75pt]   [align=left] {$\displaystyle j$};
        \draw (172.66,482.33) node [anchor=north west][inner sep=0.75pt]   [align=left] {$\displaystyle j-1$};
        \draw (189.75,511.07) node [anchor=north west][inner sep=0.75pt]   [align=left] {$\displaystyle j-2$};
        \draw (131.44,440.63) node [anchor=north west][inner sep=0.75pt]   [align=left] {$\displaystyle j+1$};
        \draw (129.26,418.07) node [anchor=north west][inner sep=0.75pt]   [align=left] {$\displaystyle j+2$};
        \draw (157.97,392.88) node [anchor=north west][inner sep=0.75pt]   [align=left] {...};
        \draw (223.23,437.79) node [anchor=north west][inner sep=0.75pt]   [align=left] {$\displaystyle j-3$};
        \draw (222.96,413.64) node [anchor=north west][inner sep=0.75pt]   [align=left] {$\displaystyle j-4$};
        \draw (302.42,363.88) node [anchor=north west][inner sep=0.75pt]   [align=left] {$\displaystyle \sigma _{2}^{-1}$};
        \draw (312,415) node [anchor=north west][inner sep=0.75pt]   [align=left] {($\displaystyle \star $)};
        \draw (55,26.4) node [anchor=north west][inner sep=0.75pt]    {};

    \end{tikzpicture}
    
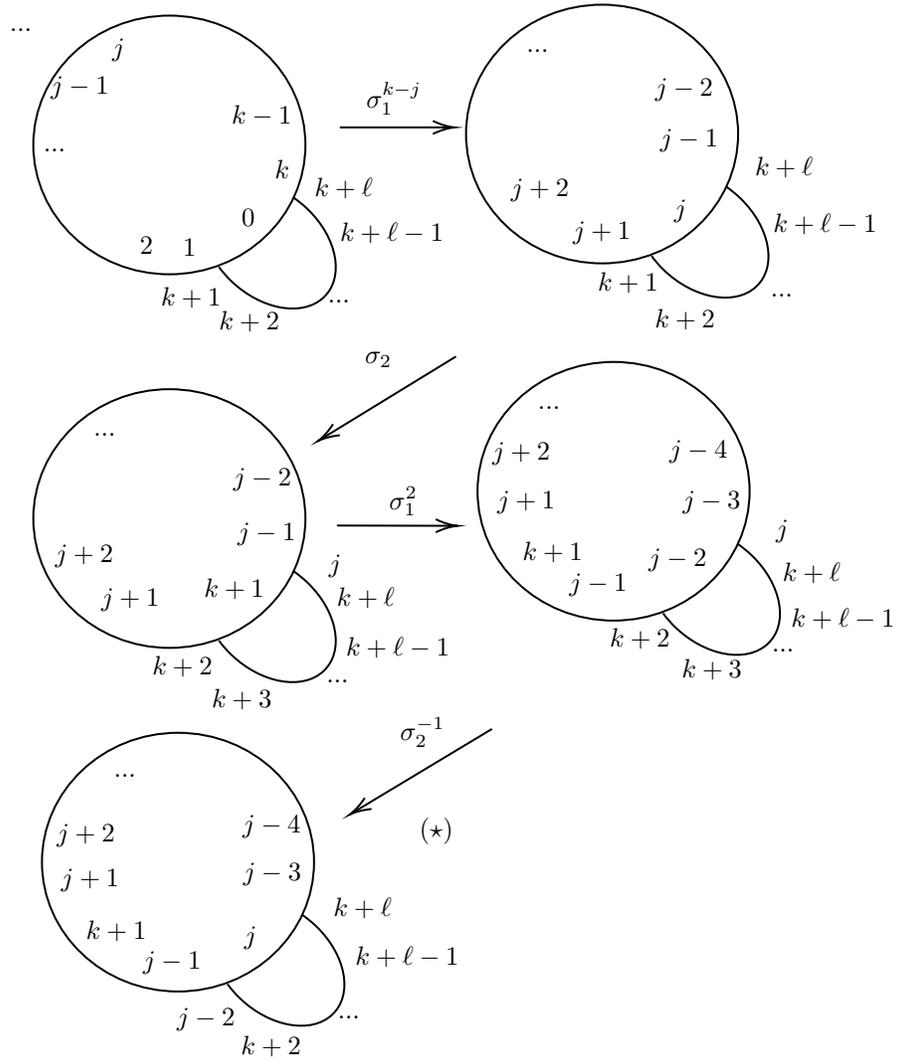
\captionof{figure}{Rubik's Shape after $\sigma_2^{-1}\sigma_1^2\sigma_2\sigma_1^{k+1-j}$}

    In between the edges labeled in the diagram, the unlabeled edges are in the natural order and the $k+1$ edge is always in position $2$.

    \begin{tikzpicture}[x=0.75pt,y=0.75pt,yscale=-1,xscale=1]

        \draw   (119.79,72.74) .. controls (119.79,36.41) and (149.5,6.95) .. (186.14,6.95) .. controls (222.79,6.95) and (252.5,36.41) .. (252.5,72.74) .. controls (252.5,109.07) and (222.79,138.52) .. (186.14,138.52) .. controls (149.5,138.52) and (119.79,109.07) .. (119.79,72.74) -- cycle ;
        \draw  [draw opacity=0] (246.53,99.41) .. controls (248.98,101.14) and (251.35,103.14) .. (253.59,105.39) .. controls (268.22,120.12) and (271.71,140.22) .. (261.39,150.3) .. controls (251.07,160.37) and (230.84,156.59) .. (216.22,141.87) .. controls (213.98,139.61) and (212,137.23) .. (210.29,134.77) -- (234.91,123.63) -- cycle ; \draw   (246.53,99.41) .. controls (248.98,101.14) and (251.35,103.14) .. (253.59,105.39) .. controls (268.22,120.12) and (271.71,140.22) .. (261.39,150.3) .. controls (251.07,160.37) and (230.84,156.59) .. (216.22,141.87) .. controls (213.98,139.61) and (212,137.23) .. (210.29,134.77) ;
        \draw    (274.9,61.08) -- (326.75,60.41) ;
        \draw [shift={(328.75,60.38)}, rotate = 179.26] [color={rgb, 255:red, 0; green, 0; blue, 0 }  ][line width=0.75]    (10.93,-3.29) .. controls (6.95,-1.4) and (3.31,-0.3) .. (0,0) .. controls (3.31,0.3) and (6.95,1.4) .. (10.93,3.29)   ;
        \draw   (332.95,72.74) .. controls (332.95,36.41) and (362.66,6.95) .. (399.31,6.95) .. controls (435.96,6.95) and (465.66,36.41) .. (465.66,72.74) .. controls (465.66,109.07) and (435.96,138.52) .. (399.31,138.52) .. controls (362.66,138.52) and (332.95,109.07) .. (332.95,72.74) -- cycle ;
        \draw  [draw opacity=0] (459.7,99.41) .. controls (462.15,101.14) and (464.52,103.14) .. (466.76,105.39) .. controls (481.38,120.12) and (484.87,140.22) .. (474.55,150.3) .. controls (464.23,160.37) and (444.01,156.59) .. (429.38,141.87) .. controls (427.14,139.61) and (425.16,137.23) .. (423.45,134.77) -- (448.07,123.63) -- cycle ; \draw   (459.7,99.41) .. controls (462.15,101.14) and (464.52,103.14) .. (466.76,105.39) .. controls (481.38,120.12) and (484.87,140.22) .. (474.55,150.3) .. controls (464.23,160.37) and (444.01,156.59) .. (429.38,141.87) .. controls (427.14,139.61) and (425.16,137.23) .. (423.45,134.77) ;
        \draw   (126.63,248.88) .. controls (126.63,212.55) and (156.34,183.1) .. (192.99,183.1) .. controls (229.63,183.1) and (259.34,212.55) .. (259.34,248.88) .. controls (259.34,285.21) and (229.63,314.66) .. (192.99,314.66) .. controls (156.34,314.66) and (126.63,285.21) .. (126.63,248.88) -- cycle ;
        \draw  [draw opacity=0] (253.38,275.55) .. controls (255.82,277.28) and (258.2,279.28) .. (260.44,281.53) .. controls (275.06,296.26) and (278.55,316.37) .. (268.23,326.44) .. controls (257.91,336.51) and (237.69,332.74) .. (223.06,318.01) .. controls (220.82,315.75) and (218.84,313.37) .. (217.13,310.92) -- (241.75,299.77) -- cycle ; \draw   (253.38,275.55) .. controls (255.82,277.28) and (258.2,279.28) .. (260.44,281.53) .. controls (275.06,296.26) and (278.55,316.37) .. (268.23,326.44) .. controls (257.91,336.51) and (237.69,332.74) .. (223.06,318.01) .. controls (220.82,315.75) and (218.84,313.37) .. (217.13,310.92) ;
        \draw    (330.16,181.74) -- (270.21,213.81) ;
        \draw [shift={(268.45,214.75)}, rotate = 331.86] [color={rgb, 255:red, 0; green, 0; blue, 0 }  ][line width=0.75]    (10.93,-3.29) .. controls (6.95,-1.4) and (3.31,-0.3) .. (0,0) .. controls (3.31,0.3) and (6.95,1.4) .. (10.93,3.29)   ;
        \draw   (335.59,248.18) .. controls (335.59,211.85) and (365.3,182.4) .. (401.94,182.4) .. controls (438.59,182.4) and (468.3,211.85) .. (468.3,248.18) .. controls (468.3,284.51) and (438.59,313.97) .. (401.94,313.97) .. controls (365.3,313.97) and (335.59,284.51) .. (335.59,248.18) -- cycle ;
        \draw  [draw opacity=0] (462.33,274.85) .. controls (464.78,276.59) and (467.15,278.58) .. (469.39,280.84) .. controls (484.02,295.57) and (487.51,315.67) .. (477.19,325.74) .. controls (466.87,335.82) and (446.64,332.04) .. (432.02,317.31) .. controls (429.78,315.06) and (427.8,312.68) .. (426.09,310.22) -- (450.71,299.08) -- cycle ; \draw   (462.33,274.85) .. controls (464.78,276.59) and (467.15,278.58) .. (469.39,280.84) .. controls (484.02,295.57) and (487.51,315.67) .. (477.19,325.74) .. controls (466.87,335.82) and (446.64,332.04) .. (432.02,317.31) .. controls (429.78,315.06) and (427.8,312.68) .. (426.09,310.22) ;
        \draw    (272.8,253.58) -- (329.56,253.85) ;
        \draw [shift={(331.56,253.86)}, rotate = 180.27] [color={rgb, 255:red, 0; green, 0; blue, 0 }  ][line width=0.75]    (10.93,-3.29) .. controls (6.95,-1.4) and (3.31,-0.3) .. (0,0) .. controls (3.31,0.3) and (6.95,1.4) .. (10.93,3.29)   ;
        \draw   (128.33,420.86) .. controls (128.33,384.53) and (158.04,355.08) .. (194.69,355.08) .. controls (231.33,355.08) and (261.04,384.53) .. (261.04,420.86) .. controls (261.04,457.19) and (231.33,486.64) .. (194.69,486.64) .. controls (158.04,486.64) and (128.33,457.19) .. (128.33,420.86) -- cycle ;
        \draw  [draw opacity=0] (255.08,447.53) .. controls (257.52,449.26) and (259.9,451.26) .. (262.14,453.52) .. controls (276.76,468.24) and (280.25,488.35) .. (269.93,498.42) .. controls (259.61,508.49) and (239.39,504.72) .. (224.76,489.99) .. controls (222.52,487.73) and (220.54,485.35) .. (218.83,482.9) -- (243.45,471.75) -- cycle ; \draw   (255.08,447.53) .. controls (257.52,449.26) and (259.9,451.26) .. (262.14,453.52) .. controls (276.76,468.24) and (280.25,488.35) .. (269.93,498.42) .. controls (259.61,508.49) and (239.39,504.72) .. (224.76,489.99) .. controls (222.52,487.73) and (220.54,485.35) .. (218.83,482.9) ;
        \draw    (342.78,362.04) -- (281.43,395.48) ;
        \draw [shift={(279.67,396.44)}, rotate = 331.41] [color={rgb, 255:red, 0; green, 0; blue, 0 }  ][line width=0.75]    (10.93,-3.29) .. controls (6.95,-1.4) and (3.31,-0.3) .. (0,0) .. controls (3.31,0.3) and (6.95,1.4) .. (10.93,3.29)   ;

        \draw (138.4,95.74) node [anchor=north west][inner sep=0.75pt]   [align=left] {$\displaystyle k+1$};
        \draw (209.13,156.7) node [anchor=north west][inner sep=0.75pt]   [align=left] {$\displaystyle k+2$};
        \draw (267.2,114.7) node [anchor=north west][inner sep=0.75pt]   [align=left] {$\displaystyle k+\ell -1$};
        \draw (260.39,150.21) node [anchor=north west][inner sep=0.75pt]   [align=left] {...};
        \draw (255.27,86.27) node [anchor=north west][inner sep=0.75pt]   [align=left] {$\displaystyle k+\ell $};
        \draw (216.94,101.95) node [anchor=north west][inner sep=0.75pt]   [align=left] {$\displaystyle j$};
        \draw (163.96,113.7) node [anchor=north west][inner sep=0.75pt]   [align=left] {$\displaystyle j-1$};
        \draw (181.08,141.58) node [anchor=north west][inner sep=0.75pt]   [align=left] {$\displaystyle j-2$};
        \draw (128.56,75.69) node [anchor=north west][inner sep=0.75pt]   [align=left] {$\displaystyle j+1$};
        \draw (123.05,50.89) node [anchor=north west][inner sep=0.75pt]   [align=left] {$\displaystyle j+2$};
        \draw (153.25,26.55) node [anchor=north west][inner sep=0.75pt]   [align=left] {...};
        \draw (214.94,71.17) node [anchor=north west][inner sep=0.75pt]   [align=left] {$\displaystyle j-3$};
        \draw (214.64,45.59) node [anchor=north west][inner sep=0.75pt]   [align=left] {$\displaystyle j-4$};
        \draw (340.05,83.26) node [anchor=north west][inner sep=0.75pt]   [align=left] {$\displaystyle k+1$};
        \draw (428.49,159.39) node [anchor=north west][inner sep=0.75pt]   [align=left] {$\displaystyle k+2$};
        \draw (483.36,119.41) node [anchor=north west][inner sep=0.75pt]   [align=left] {$\displaystyle k+\ell -1$};
        \draw (473.55,150.9) node [anchor=north west][inner sep=0.75pt]   [align=left] {...};
        \draw (476.33,87.66) node [anchor=north west][inner sep=0.75pt]   [align=left] {$\displaystyle k+\ell $};
        \draw (392.15,114.28) node [anchor=north west][inner sep=0.75pt]   [align=left] {$\displaystyle j$};
        \draw (357.89,103.76) node [anchor=north west][inner sep=0.75pt]   [align=left] {$\displaystyle j-1$};
        \draw (395.31,145.52) node [anchor=north west][inner sep=0.75pt]   [align=left] {$\displaystyle j-2$};
        \draw (338.75,55.38) node [anchor=north west][inner sep=0.75pt]   [align=left] {$\displaystyle j+1$};
        \draw (366.41,26.55) node [anchor=north west][inner sep=0.75pt]   [align=left] {...};
        \draw (416.88,96.13) node [anchor=north west][inner sep=0.75pt]   [align=left] {$\displaystyle j-3$};
        \draw (429.2,68.7) node [anchor=north west][inner sep=0.75pt]   [align=left] {$\displaystyle j-4$};
        \draw (293.8,41.92) node [anchor=north west][inner sep=0.75pt]    {$\sigma _{1}$};
        \draw (426.4,45.51) node [anchor=north west][inner sep=0.75pt]   [align=left] {$\displaystyle j-5$};
        \draw (137.73,259.4) node [anchor=north west][inner sep=0.75pt]   [align=left] {$\displaystyle k+1$};
        \draw (191.03,321.36) node [anchor=north west][inner sep=0.75pt]   [align=left] {$\displaystyle k+2$};
        \draw (268.63,326.35) node [anchor=north west][inner sep=0.75pt]   [align=left] {...};
        \draw (281.03,292.6) node [anchor=north west][inner sep=0.75pt]   [align=left] {$\displaystyle k+\ell $};
        \draw (190.83,295.43) node [anchor=north west][inner sep=0.75pt]   [align=left] {$\displaystyle j$};
        \draw (157.57,280.9) node [anchor=north west][inner sep=0.75pt]   [align=left] {$\displaystyle j-1$};
        \draw (209.96,276.5) node [anchor=north west][inner sep=0.75pt]   [align=left] {$\displaystyle j-2$};
        \draw (132.79,237.19) node [anchor=north west][inner sep=0.75pt]   [align=left] {$\displaystyle j+1$};
        \draw (160.09,202.69) node [anchor=north west][inner sep=0.75pt]   [align=left] {...};
        \draw (266.54,262.26) node [anchor=north west][inner sep=0.75pt]   [align=left] {$\displaystyle j-3$};
        \draw (221.88,248.07) node [anchor=north west][inner sep=0.75pt]   [align=left] {$\displaystyle j-4$};
        \draw (222.58,226.65) node [anchor=north west][inner sep=0.75pt]   [align=left] {$\displaystyle j-5$};
        \draw (293.1,171.6) node [anchor=north west][inner sep=0.75pt]    {$\sigma _{2}$};
        \draw (221.67,335.14) node [anchor=north west][inner sep=0.75pt]   [align=left] {$\displaystyle k+3$};
        \draw (343.67,222.13) node [anchor=north west][inner sep=0.75pt]   [align=left] {$\displaystyle k+1$};
        \draw (397.09,320.66) node [anchor=north west][inner sep=0.75pt]   [align=left] {$\displaystyle k+2$};
        \draw (476.89,324.96) node [anchor=north west][inner sep=0.75pt]   [align=left] {...};
        \draw (487.69,291.91) node [anchor=north west][inner sep=0.75pt]   [align=left] {$\displaystyle k+\ell $};
        \draw (362.73,272.41) node [anchor=north west][inner sep=0.75pt]   [align=left] {$\displaystyle j$};
        \draw (344.19,249.79) node [anchor=north west][inner sep=0.75pt]   [align=left] {$\displaystyle j-1$};
        \draw (377.46,290.15) node [anchor=north west][inner sep=0.75pt]   [align=left] {$\displaystyle j-2$};
        \draw (369.05,202) node [anchor=north west][inner sep=0.75pt]   [align=left] {...};
        \draw (477.5,265.57) node [anchor=north west][inner sep=0.75pt]   [align=left] {$\displaystyle j-3$};
        \draw (417.82,273.5) node [anchor=north west][inner sep=0.75pt]   [align=left] {$\displaystyle j-4$};
        \draw (431.84,243.22) node [anchor=north west][inner sep=0.75pt]   [align=left] {$\displaystyle j-5$};
        \draw (432.03,334.45) node [anchor=north west][inner sep=0.75pt]   [align=left] {$\displaystyle k+3$};
        \draw (298.71,233.04) node [anchor=north west][inner sep=0.75pt]    {$\sigma _{1}$};
        \draw (426.14,216.49) node [anchor=north west][inner sep=0.75pt]   [align=left] {$\displaystyle j-6$};
        \draw (135.42,408.8) node [anchor=north west][inner sep=0.75pt]   [align=left] {$\displaystyle k+1$};
        \draw (225.97,509.13) node [anchor=north west][inner sep=0.75pt]   [align=left] {$\displaystyle k+2$};
        \draw (271.03,499.02) node [anchor=north west][inner sep=0.75pt]   [align=left] {...};
        \draw (266.6,439.85) node [anchor=north west][inner sep=0.75pt]   [align=left] {$\displaystyle k+\ell $};
        \draw (157.48,450.39) node [anchor=north west][inner sep=0.75pt]   [align=left] {$ $j};
        \draw (140.93,432.47) node [anchor=north west][inner sep=0.75pt]   [align=left] {$\displaystyle j-1$};
        \draw (174.2,464.43) node [anchor=north west][inner sep=0.75pt]   [align=left] {$\displaystyle j-2$};
        \draw (161.79,374.67) node [anchor=north west][inner sep=0.75pt]   [align=left] {...};
        \draw (213.16,448.18) node [anchor=north west][inner sep=0.75pt]   [align=left] {$\displaystyle j-3$};
        \draw (188.69,492.64) node [anchor=north west][inner sep=0.75pt]   [align=left] {$\displaystyle j-4$};
        \draw (222.28,411.58) node [anchor=north west][inner sep=0.75pt]   [align=left] {$\displaystyle j-5$};
        \draw (217.78,392.78) node [anchor=north west][inner sep=0.75pt]   [align=left] {$\displaystyle j-6$};
        \draw (299.77,350.75) node [anchor=north west][inner sep=0.75pt]    {$\sigma _{2}^{-1}$};
        \draw (280.04,468.82) node [anchor=north west][inner sep=0.75pt]   [align=left] {$\displaystyle k+\ell -1$};
        \draw (55,26.4) node [anchor=north west][inner sep=0.75pt]    {};

    \end{tikzpicture}
    
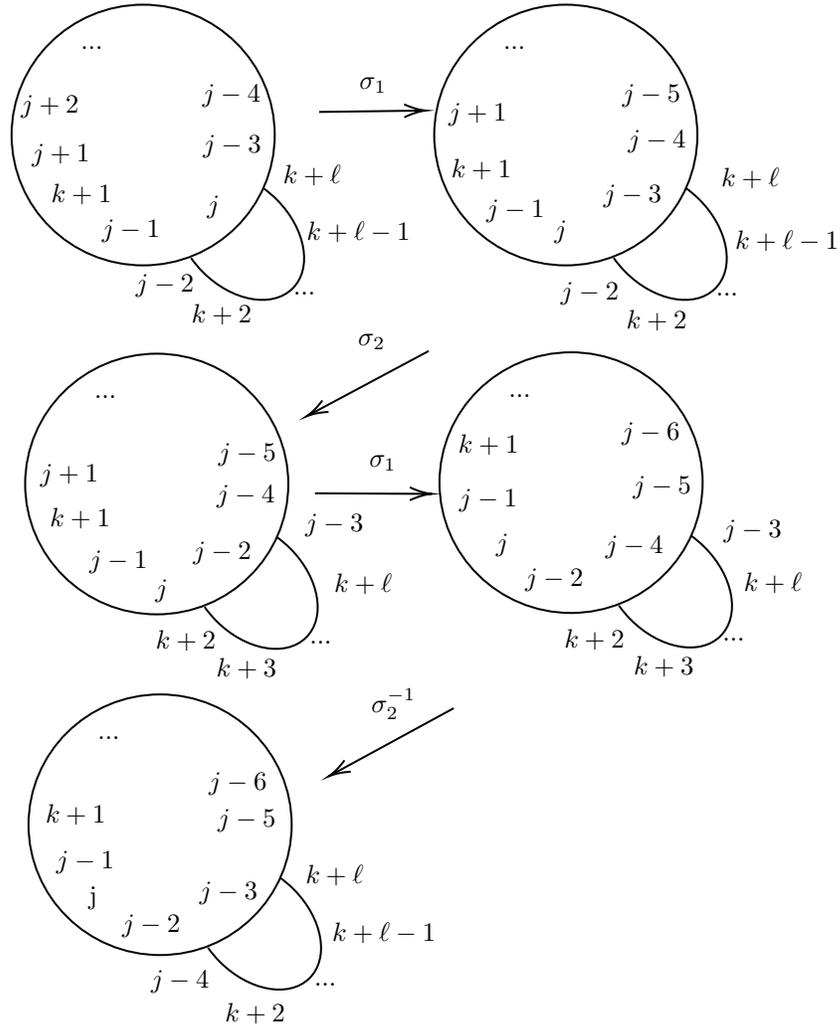
\captionof{figure}{Run $\sigma_2^{-1}\sigma_1\sigma_2\sigma_1$ on $(\star)$}

    Note that $k+1$ always moves two positions. Since $k$ is even, it takes $\frac{k-2}{2}$ to move $k+1$ to it's original spot. The $k-2$ comes from the fact that $k+1$ is always in the second position in $(\star)$.

    \begin{tikzpicture}[x=0.75pt,y=0.75pt,yscale=-1,xscale=1]

        \draw   (128.79,82.32) .. controls (128.79,44.83) and (160.95,14.44) .. (200.63,14.44) .. controls (240.3,14.44) and (272.47,44.83) .. (272.47,82.32) .. controls (272.47,119.81) and (240.3,150.2) .. (200.63,150.2) .. controls (160.95,150.2) and (128.79,119.81) .. (128.79,82.32) -- cycle ;
        \draw  [draw opacity=0] (266.01,109.85) .. controls (268.66,111.63) and (271.23,113.69) .. (273.65,116.02) .. controls (289.48,131.22) and (293.26,151.96) .. (282.09,162.35) .. controls (270.91,172.74) and (249.02,168.85) .. (233.19,153.65) .. controls (230.76,151.32) and (228.62,148.86) .. (226.78,146.34) -- (253.42,134.83) -- cycle ; \draw   (266.01,109.85) .. controls (268.66,111.63) and (271.23,113.69) .. (273.65,116.02) .. controls (289.48,131.22) and (293.26,151.96) .. (282.09,162.35) .. controls (270.91,172.74) and (249.02,168.85) .. (233.19,153.65) .. controls (230.76,151.32) and (228.62,148.86) .. (226.78,146.34) ;
        \draw    (292.16,77.87) -- (350.74,78.84) ;
        \draw [shift={(352.74,78.87)}, rotate = 180.95] [color={rgb, 255:red, 0; green, 0; blue, 0 }  ][line width=0.75]    (10.93,-3.29) .. controls (6.95,-1.4) and (3.31,-0.3) .. (0,0) .. controls (3.31,0.3) and (6.95,1.4) .. (10.93,3.29)   ;
        \draw   (364.12,82.78) .. controls (364.12,45.29) and (396.28,14.9) .. (435.96,14.9) .. controls (475.63,14.9) and (507.8,45.29) .. (507.8,82.78) .. controls (507.8,120.27) and (475.63,150.66) .. (435.96,150.66) .. controls (396.28,150.66) and (364.12,120.27) .. (364.12,82.78) -- cycle ;
        \draw  [draw opacity=0] (501.35,110.31) .. controls (503.99,112.09) and (506.56,114.16) .. (508.98,116.48) .. controls (524.81,131.68) and (528.59,152.43) .. (517.42,162.82) .. controls (506.25,173.21) and (484.35,169.31) .. (468.52,154.11) .. controls (466.1,151.78) and (463.95,149.33) .. (462.11,146.8) -- (488.75,135.3) -- cycle ; \draw   (501.35,110.31) .. controls (503.99,112.09) and (506.56,114.16) .. (508.98,116.48) .. controls (524.81,131.68) and (528.59,152.43) .. (517.42,162.82) .. controls (506.25,173.21) and (484.35,169.31) .. (468.52,154.11) .. controls (466.1,151.78) and (463.95,149.33) .. (462.11,146.8) ;

        \draw (148.91,106.58) node [anchor=north west][inner sep=0.75pt]   [align=left] {$\displaystyle k+1$};
        \draw (226.13,168.54) node [anchor=north west][inner sep=0.75pt]   [align=left] {$\displaystyle k+2$};
        \draw (291.77,122.41) node [anchor=north west][inner sep=0.75pt]   [align=left] {$\displaystyle k+\ell -1$};
        \draw (284.62,161.81) node [anchor=north west][inner sep=0.75pt]   [align=left] {...};
        \draw (279.7,102.06) node [anchor=north west][inner sep=0.75pt]   [align=left] {$\displaystyle k+\ell $};
        \draw (234.42,112.76) node [anchor=north west][inner sep=0.75pt]   [align=left] {$\displaystyle j$};
        \draw (173.57,123.37) node [anchor=north west][inner sep=0.75pt]   [align=left] {$\displaystyle j-1$};
        \draw (197.78,151.53) node [anchor=north west][inner sep=0.75pt]   [align=left] {$\displaystyle j-2$};
        \draw (136.53,87.25) node [anchor=north west][inner sep=0.75pt]   [align=left] {$\displaystyle j+1$};
        \draw (134.48,62.29) node [anchor=north west][inner sep=0.75pt]   [align=left] {$\displaystyle j+2$};
        \draw (165.59,34.93) node [anchor=north west][inner sep=0.75pt]   [align=left] {...};
        \draw (236.77,76.61) node [anchor=north west][inner sep=0.75pt]   [align=left] {$\displaystyle j-3$};
        \draw (233.15,57.54) node [anchor=north west][inner sep=0.75pt]   [align=left] {$\displaystyle j-4$};
        \draw (250.02,11.32) node [anchor=north west][inner sep=0.75pt]   [align=left] {($\displaystyle \star $)};
        \draw (386.51,107.12) node [anchor=north west][inner sep=0.75pt]   [align=left] {$\displaystyle j+3$};
        \draw (460.62,178.6) node [anchor=north west][inner sep=0.75pt]   [align=left] {$\displaystyle k+2$};
        \draw (523.3,127.37) node [anchor=north west][inner sep=0.75pt]   [align=left] {$\displaystyle k+\ell -1$};
        \draw (520.71,160.13) node [anchor=north west][inner sep=0.75pt]   [align=left] {...};
        \draw (513.74,102.82) node [anchor=north west][inner sep=0.75pt]   [align=left] {$\displaystyle k+\ell $};
        \draw (456.89,106.95) node [anchor=north west][inner sep=0.75pt]   [align=left] {$\displaystyle j+1$};
        \draw (412.55,126.7) node [anchor=north west][inner sep=0.75pt]   [align=left] {$\displaystyle j+2$};
        \draw (430.69,157.64) node [anchor=north west][inner sep=0.75pt]   [align=left] {$\displaystyle k+1$};
        \draw (373.69,83.59) node [anchor=north west][inner sep=0.75pt]   [align=left] {$\displaystyle j+4$};
        \draw (400.93,35.39) node [anchor=north west][inner sep=0.75pt]   [align=left] {...};
        \draw (472.64,79.22) node [anchor=north west][inner sep=0.75pt]   [align=left] {$\displaystyle j-1$};
        \draw (490.82,70.08) node [anchor=north west][inner sep=0.75pt]   [align=left] {$\displaystyle j$};
        \draw (270.17,15.07) node [anchor=north west][inner sep=0.75pt]    {$\left( \sigma _{2}^{-1} \sigma _{1} \sigma _{2} \sigma _{1}\right)^{\frac{k-2}{2}}$};
        \draw (462.06,48.53) node [anchor=north west][inner sep=0.75pt]   [align=left] {$\displaystyle j-2$};
        \draw (440.38,29.21) node [anchor=north west][inner sep=0.75pt]   [align=left] {$\displaystyle j-3$};
        \draw (55,26.4) node [anchor=north west][inner sep=0.75pt]    {};

    \end{tikzpicture}
    \captionof{figure}{Run $(\sigma_2^{-1}\sigma_1\sigma_2\sigma_1)^{\frac{k-2}{2}}$ on $(\star)$}

    Since the zeroth position is always occupied by the $j+1$, we need to $\sigma_1^{j+1}$ to get it back to it's original position. Note, using this notation, if $j$ or $j-1$ was zero, then we put the edge that was switched with the zero into the zeroth position. Thus, we have three different ending states. In the bottom two diagrams in [FIGURE] below is the case when $j$ or $j-1$ is $0$.

    \begin{tikzpicture}[x=0.75pt,y=0.75pt,yscale=-1,xscale=1]

        \draw   (137.54,79.67) .. controls (137.54,42.18) and (167.07,11.79) .. (203.5,11.79) .. controls (239.92,11.79) and (269.45,42.18) .. (269.45,79.67) .. controls (269.45,117.15) and (239.92,147.55) .. (203.5,147.55) .. controls (167.07,147.55) and (137.54,117.15) .. (137.54,79.67) -- cycle ;
        \draw  [draw opacity=0] (263.52,107.19) .. controls (265.95,108.98) and (268.31,111.04) .. (270.53,113.37) .. controls (285.06,128.56) and (288.53,149.31) .. (278.28,159.7) .. controls (268.02,170.09) and (247.93,166.2) .. (233.39,151) .. controls (231.17,148.67) and (229.2,146.22) .. (227.51,143.69) -- (251.96,132.18) -- cycle ; \draw   (263.52,107.19) .. controls (265.95,108.98) and (268.31,111.04) .. (270.53,113.37) .. controls (285.06,128.56) and (288.53,149.31) .. (278.28,159.7) .. controls (268.02,170.09) and (247.93,166.2) .. (233.39,151) .. controls (231.17,148.67) and (229.2,146.22) .. (227.51,143.69) ;
        \draw    (273.59,82.04) -- (328.6,81.49) ;
        \draw [shift={(330.6,81.47)}, rotate = 179.42] [color={rgb, 255:red, 0; green, 0; blue, 0 }  ][line width=0.75]    (10.93,-3.29) .. controls (6.95,-1.4) and (3.31,-0.3) .. (0,0) .. controls (3.31,0.3) and (6.95,1.4) .. (10.93,3.29)   ;
        \draw    (173.79,147.16) -- (173.13,191.1) ;
        \draw [shift={(173.1,193.1)}, rotate = 270.87] [color={rgb, 255:red, 0; green, 0; blue, 0 }  ][line width=0.75]    (10.93,-3.29) .. controls (6.95,-1.4) and (3.31,-0.3) .. (0,0) .. controls (3.31,0.3) and (6.95,1.4) .. (10.93,3.29)   ;
        \draw   (343.13,76.63) .. controls (343.13,39.14) and (372.66,8.75) .. (409.09,8.75) .. controls (445.51,8.75) and (475.04,39.14) .. (475.04,76.63) .. controls (475.04,114.12) and (445.51,144.51) .. (409.09,144.51) .. controls (372.66,144.51) and (343.13,114.12) .. (343.13,76.63) -- cycle ;
        \draw  [draw opacity=0] (469.11,104.16) .. controls (471.54,105.94) and (473.9,108) .. (476.12,110.33) .. controls (490.65,125.53) and (494.12,146.27) .. (483.87,156.67) .. controls (473.61,167.06) and (453.52,163.16) .. (438.98,147.96) .. controls (436.76,145.64) and (434.79,143.18) .. (433.1,140.65) -- (457.55,129.15) -- cycle ; \draw   (469.11,104.16) .. controls (471.54,105.94) and (473.9,108) .. (476.12,110.33) .. controls (490.65,125.53) and (494.12,146.27) .. (483.87,156.67) .. controls (473.61,167.06) and (453.52,163.16) .. (438.98,147.96) .. controls (436.76,145.64) and (434.79,143.18) .. (433.1,140.65) ;
        \draw   (127.79,263.4) .. controls (127.79,225.91) and (157.31,195.52) .. (193.74,195.52) .. controls (230.16,195.52) and (259.69,225.91) .. (259.69,263.4) .. controls (259.69,300.88) and (230.16,331.27) .. (193.74,331.27) .. controls (157.31,331.27) and (127.79,300.88) .. (127.79,263.4) -- cycle ;
        \draw  [draw opacity=0] (253.76,290.92) .. controls (256.19,292.71) and (258.55,294.77) .. (260.77,297.09) .. controls (275.31,312.29) and (278.77,333.04) .. (268.52,343.43) .. controls (258.26,353.82) and (238.17,349.93) .. (223.63,334.73) .. controls (221.41,332.4) and (219.45,329.95) .. (217.75,327.42) -- (242.2,315.91) -- cycle ; \draw   (253.76,290.92) .. controls (256.19,292.71) and (258.55,294.77) .. (260.77,297.09) .. controls (275.31,312.29) and (278.77,333.04) .. (268.52,343.43) .. controls (258.26,353.82) and (238.17,349.93) .. (223.63,334.73) .. controls (221.41,332.4) and (219.45,329.95) .. (217.75,327.42) ;
        \draw   (327.8,246.94) .. controls (327.8,209.45) and (357.33,179.06) .. (393.75,179.06) .. controls (430.18,179.06) and (459.71,209.45) .. (459.71,246.94) .. controls (459.71,284.43) and (430.18,314.82) .. (393.75,314.82) .. controls (357.33,314.82) and (327.8,284.43) .. (327.8,246.94) -- cycle ;
        \draw  [draw opacity=0] (453.78,274.46) .. controls (456.21,276.25) and (458.56,278.31) .. (460.79,280.64) .. controls (475.32,295.83) and (478.79,316.58) .. (468.53,326.97) .. controls (458.28,337.36) and (438.18,333.47) .. (423.65,318.27) .. controls (421.43,315.94) and (419.46,313.49) .. (417.76,310.96) -- (442.22,299.45) -- cycle ; \draw   (453.78,274.46) .. controls (456.21,276.25) and (458.56,278.31) .. (460.79,280.64) .. controls (475.32,295.83) and (478.79,316.58) .. (468.53,326.97) .. controls (458.28,337.36) and (438.18,333.47) .. (423.65,318.27) .. controls (421.43,315.94) and (419.46,313.49) .. (417.76,310.96) ;
        \draw    (297.15,173.78) -- (325.72,203.12) ;
        \draw [shift={(327.11,204.55)}, rotate = 225.76] [color={rgb, 255:red, 0; green, 0; blue, 0 }  ][line width=0.75]    (10.93,-3.29) .. controls (6.95,-1.4) and (3.31,-0.3) .. (0,0) .. controls (3.31,0.3) and (6.95,1.4) .. (10.93,3.29)   ;

        \draw (149.75,95.9) node [anchor=north west][inner sep=0.75pt]   [align=left] {$\displaystyle j+3$};
        \draw (226.42,168.12) node [anchor=north west][inner sep=0.75pt]   [align=left] {$\displaystyle k+2$};
        \draw (284.82,121.95) node [anchor=north west][inner sep=0.75pt]   [align=left] {$\displaystyle k+\ell -1$};
        \draw (280.03,157.73) node [anchor=north west][inner sep=0.75pt]   [align=left] {...};
        \draw (276.01,94.76) node [anchor=north west][inner sep=0.75pt]   [align=left] {$\displaystyle k+\ell $};
        \draw (216.22,103.26) node [anchor=north west][inner sep=0.75pt]   [align=left] {$\displaystyle j+1$};
        \draw (169.17,116.79) node [anchor=north west][inner sep=0.75pt]   [align=left] {$\displaystyle j+2$};
        \draw (198.47,150.26) node [anchor=north west][inner sep=0.75pt]   [align=left] {$\displaystyle k+1$};
        \draw (144.65,71.52) node [anchor=north west][inner sep=0.75pt]   [align=left] {$\displaystyle j+4$};
        \draw (170.76,32.28) node [anchor=north west][inner sep=0.75pt]   [align=left] {...};
        \draw (232.68,76.58) node [anchor=north west][inner sep=0.75pt]   [align=left] {$\displaystyle j-1$};
        \draw (248.41,58.97) node [anchor=north west][inner sep=0.75pt]   [align=left] {$\displaystyle j$};
        \draw (218.58,42.57) node [anchor=north west][inner sep=0.75pt]   [align=left] {$\displaystyle j-2$};
        \draw (202.52,23.25) node [anchor=north west][inner sep=0.75pt]   [align=left] {$\displaystyle j-3$};
        \draw (287.71,54.7) node [anchor=north west][inner sep=0.75pt]    {$\sigma _{1}^{j+1}$};
        \draw (140.66,158.46) node [anchor=north west][inner sep=0.75pt]    {$\sigma _{1}^{j+1}$};
        \draw (442.55,108.97) node [anchor=north west][inner sep=0.75pt]   [align=left] {$\displaystyle 0$};
        \draw (414.05,124.76) node [anchor=north west][inner sep=0.75pt]   [align=left] {$\displaystyle 1$};
        \draw (393.07,123.61) node [anchor=north west][inner sep=0.75pt]   [align=left] {$\displaystyle 2$};
        \draw (346.38,77.19) node [anchor=north west][inner sep=0.75pt]   [align=left] {...};
        \draw (370.31,27.01) node [anchor=north west][inner sep=0.75pt]   [align=left] {$\displaystyle j-1$};
        \draw (357.42,44.82) node [anchor=north west][inner sep=0.75pt]   [align=left] {$\displaystyle j$};
        \draw (422.24,17.52) node [anchor=north west][inner sep=0.75pt]   [align=left] {...};
        \draw (429.97,57.73) node [anchor=north west][inner sep=0.75pt]   [align=left] {$\displaystyle k-1$};
        \draw (458.66,83.21) node [anchor=north west][inner sep=0.75pt]   [align=left] {$\displaystyle k$};
        \draw (404.61,148.77) node [anchor=north west][inner sep=0.75pt]   [align=left] {$\displaystyle k+1$};
        \draw (436.89,165.51) node [anchor=north west][inner sep=0.75pt]   [align=left] {$\displaystyle k+2$};
        \draw (493.5,115.2) node [anchor=north west][inner sep=0.75pt]   [align=left] {$\displaystyle k+\ell -1$};
        \draw (487.01,155.41) node [anchor=north west][inner sep=0.75pt]   [align=left] {...};
        \draw (480.6,92.01) node [anchor=north west][inner sep=0.75pt]   [align=left] {$\displaystyle k+\ell $};
        \draw (202.81,307.18) node [anchor=north west][inner sep=0.75pt]   [align=left] {$\displaystyle 0$};
        \draw (224.49,295.06) node [anchor=north west][inner sep=0.75pt]   [align=left] {$\displaystyle 1$};
        \draw (177.72,310.37) node [anchor=north west][inner sep=0.75pt]   [align=left] {$\displaystyle 2$};
        \draw (151.83,217.16) node [anchor=north west][inner sep=0.75pt]   [align=left] {...};
        \draw (220.62,240.49) node [anchor=north west][inner sep=0.75pt]   [align=left] {$\displaystyle k-1$};
        \draw (239.31,265.97) node [anchor=north west][inner sep=0.75pt]   [align=left] {$\displaystyle k$};
        \draw (191.26,337.54) node [anchor=north west][inner sep=0.75pt]   [align=left] {$\displaystyle k+1$};
        \draw (218.54,353.27) node [anchor=north west][inner sep=0.75pt]   [align=left] {$\displaystyle k+2$};
        \draw (277.15,305.97) node [anchor=north west][inner sep=0.75pt]   [align=left] {$\displaystyle k+\ell -1$};
        \draw (274.46,342.89) node [anchor=north west][inner sep=0.75pt]   [align=left] {...};
        \draw (262.25,280.77) node [anchor=north west][inner sep=0.75pt]   [align=left] {$\displaystyle k+\ell $};
        \draw (437.67,251.37) node [anchor=north west][inner sep=0.75pt]   [align=left] {$\displaystyle 0$};
        \draw (398.72,295.06) node [anchor=north west][inner sep=0.75pt]   [align=left] {$\displaystyle 1$};
        \draw (377.73,293.91) node [anchor=north west][inner sep=0.75pt]   [align=left] {$\displaystyle 2$};
        \draw (354.05,201.69) node [anchor=north west][inner sep=0.75pt]   [align=left] {...};
        \draw (418.64,223.03) node [anchor=north west][inner sep=0.75pt]   [align=left] {$\displaystyle k-1$};
        \draw (425.21,276.41) node [anchor=north west][inner sep=0.75pt]   [align=left] {$\displaystyle k$};
        \draw (390.27,321.08) node [anchor=north west][inner sep=0.75pt]   [align=left] {$\displaystyle k+1$};
        \draw (422.56,337.81) node [anchor=north west][inner sep=0.75pt]   [align=left] {$\displaystyle k+2$};
        \draw (478.17,290.51) node [anchor=north west][inner sep=0.75pt]   [align=left] {$\displaystyle k+\ell -1$};
        \draw (470.29,327.15) node [anchor=north west][inner sep=0.75pt]   [align=left] {...};
        \draw (467.27,259.32) node [anchor=north west][inner sep=0.75pt]   [align=left] {$\displaystyle k+\ell $};
        \draw (314.89,167.05) node [anchor=north west][inner sep=0.75pt]    {$\sigma _{1}^{j+1}$};
        \draw (55,26.4) node [anchor=north west][inner sep=0.75pt]    {};

    \end{tikzpicture}

    \captionof{figure}{Result}

    Let $\psi_j = \sigma_1^{j+1}(\sigma_2^{-1}\sigma_1\sigma_2\sigma_1)^{\frac{k-2}{2}}\sigma_2^{-1}\sigma_1^2\sigma_2\sigma_1^{k+1-j}$ denote the algorithm to transpose the edges $j-1$ and $j$. To switch any edges $a$ and $b$ from cycle $C_1$ where $b < a$, it is the conjugate
    \[ \Psi_{(a,b)} = (\psi_{b+2}\psi_{b+3}\dots\psi_{a-1}\psi_{a})^{-1}\psi_{b+1}(\psi_{b+2}\psi_{b+3}\dots\psi_{a-1}\psi_{a}). \]
    To switch any edge $a$ from cycle $C_1$ and $b$ from cycle $C_2$ is the conjugate
    \[ (\sigma_2^{k-b})^{-1}\Psi_{(a, 0)}\sigma_2^{k-b} \]

    \textbf{Inductive Step:} Let $C_i$ be the cycle sharing the edge $\ell$ with the new cycle $C_{n+1}$ and let $\sigma_i$ and $\sigma_{n+1}$ be the cycle in $C_i$ and $C_{n+1}$ respectively. Let $k$ be the size of $C_{n+1}$. Let $\psi_{\ell + 1}$ be the algorithm to swap the edges $\ell$ and $\ell + 1$ in $C_i$.

    We claim that if $\ell \leq j - 1 < j \leq k$ then the product,
    \[ (\sigma_{n+1}\sigma_i\sigma_{n+1}^{k+1-j})^{-1}\psi_{\ell + 1}(\sigma_{n+1}\sigma_i\sigma_{n+1}^{k+1-j}) \]
    will transpose the edges $j-1$ and $j$.

    Following the same logic from above, we can switch any edges $a$ and $b$ in $C_{n+1}$ where $b < a$ by the conjugate
    \[ \Phi_{(a,b)}=(\phi_{b+2}\phi_{b+3}\dots\phi_{a-1}\phi_{a})^{-1}\phi_{b+1}(\phi_{b+2}\phi_{b+3}\dots\phi_{a-1}\phi_{a}) \]
    where $\phi_j = (\sigma_{n+1}\sigma_i\sigma_{n+1}^{k+1-j})^{-1}\psi_{\ell + 1}(\sigma_{n+1}\sigma_i\sigma_{n+1}^{k+1-j})$.
\end{proof}

\begin{theorem}
    Suppose that $G$ is a 2d-Rubik's shape with distinguished cycles $C_1, C_2$ and that the length of both $C_1, C_2$ are odd. Then $G$ is not a complete 2d-Rubik's shape.
\end{theorem}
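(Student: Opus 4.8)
The plan is to run a parity (sign) argument. Recall that the generator $\sigma_i$ attached to the cycle $C_i$ acts on $E(G)$ by cyclically permuting the $|E(C_i)|$ edges of $C_i$ and fixing every other edge, so as an element of $\Sym(E(G))$ it is a single cycle of length $|E(C_i)|$. A cycle of length $m$ is an even permutation exactly when $m$ is odd, so under the sign homomorphism $\operatorname{sgn}\colon \Sym(E(G)) \to \{\pm 1\}$ we get $\operatorname{sgn}(\sigma_i) = (-1)^{|E(C_i)|-1} = +1$ for $i = 1,2$, using the hypothesis that both lengths are odd.

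Next I would note that this forces the whole group $H = \langle \sigma_1, \sigma_2 \rangle$ into the kernel of $\operatorname{sgn}$, namely the alternating group $\mathrm{Alt}(E(G))$: the image $\operatorname{sgn}(H)$ is the subgroup of $\{\pm 1\}$ generated by $\operatorname{sgn}(\sigma_1) = \operatorname{sgn}(\sigma_2) = +1$, hence trivial. Since each of $C_1, C_2$ has at least $3$ edges and they share exactly one edge (we are in the one-shared-edge setting, where the attaching path is a single edge), $|E(G)| = |E(C_1)| + |E(C_2)| - 1 \geq 5$, so $\mathrm{Alt}(E(G))$ is a proper subgroup of $\Sym(E(G))$ of index $2$. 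Consequently $|H| \leq |\mathrm{Alt}(E(G))| = \tfrac12|\Sym(E(G))| < |\Sym(E(G))|$, and since $\Sym(E(G))$ is finite, $H$ cannot be isomorphic to a group of strictly larger order; hence $H \not\cong \Sym(E(G))$, which is precisely the failure of completeness.

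There is no genuinely hard step here — the argument is just the standard fact that an odd-length cyclic rotation is an even permutation, together with finiteness of the symmetric group. The only points deserving a sentence of care are: (i) confirming that "length of $C_i$" means the number of edges (equivalently vertices) of $C_i$, so that $\sigma_i$ is literally an $|E(C_i)|$-cycle on $E(G)$; and (ii) checking $|E(G)| \geq 2$ so that $\mathrm{Alt}(E(G)) \neq \Sym(E(G))$, which is automatic since every distinguished cycle is at least a triangle. I would close by remarking that this parity obstruction is robust: \emph{any} $2$d-Rubik's shape all of whose distinguished cycles have odd length is non-complete for the identical reason, which dovetails with the "even cycle" hypothesis needed in the preceding theorem.
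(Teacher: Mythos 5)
Your proof is correct and is essentially the paper's own argument: both observe that an odd-length cyclic rotation of a cycle's edges is an even permutation, so $\langle\sigma_1,\sigma_2\rangle$ lies in the alternating group on $E(G)$, a proper subgroup of $\Sym(E(G))$. Your extra step ruling out an abstract isomorphism via a cardinality count is a small tightening the paper leaves implicit, but the route is the same.
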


\begin{proof}
    Since neither of $C_1$ or $C_2$ has even length, the group of permutations corresponding to the Rubik's shape G is generated by $\sigma_1$ and $\sigma_2$ which are both even permutations, and so must be a subgroup of $A_{2k+2\ell}$, and thus a proper subgroup of $S_{2k+2\ell}$.

\end{proof}



\bibliography{refs}{}
\bibliographystyle{plain}

\end{document}